\documentclass[a4paper,12pt,epsfig]{article}
\usepackage{amsmath,amssymb,amsthm,amscd}
\usepackage{graphics}
\usepackage{graphicx}

\textwidth 6.5in

\textheight 8.4in

\hoffset -2.0cm

\voffset -1.5cm

\sloppy

\frenchspacing
\righthyphenmin=2

\usepackage{url}

\newtheorem{thm}{Theorem}[section]

\newtheorem{cor}{Corollary}[section]

\title {Semidefinite programming bounds for distance distribution of spherical codes}

\author {Oleg R. Musin}

\begin{document}
\date{}

\maketitle

\begin{abstract}   We  present an  extension of known semidefinite and linear programming upper bounds for spherical codes.  We apply the main result for the distance distribution of a spherical code and show that this method can work effectively.  In particular, we get a shorter solution to the kissing number problem in dimension 4. 
\end {abstract}

{\small\bf Mathematics Subject Classification (2010)} {\small 90C22, 52C17}

\medskip

\section{Introduction}

 
 Let $G_k^{(n)}(t)$ (with $G_k^{(n)}(1)=1$ and  $\deg(G_k^{(n)})=k$ ) be Gegenbauer  polynomials that are orthogonal on the interval $[-1,1]$ with respect to the weight function $(1-t^2)^{(n-3)/2}$.   
 
 Let  $C$ be an $N$--element subset of the unit sphere ${\mathbb S}^{n-1}\subset {\mathbb R}^{n}$. We define the $k$-th moment of $C$ as
 $$
 M_k(C):=\sum\limits_{(c,c')\in C^2}{G_k^{(n)}(c\cdot c')}
 $$
 The positive semidefinite property of Gegenabauer polynomials yields that 
 $$
 M_k(C)\ge0 \; \mbox{ for all  }  \;  k=1,2,...  \eqno (1.1) 
 $$
 
 Let $f$ be a non-negative linear combination of Gegenabauer polynomials: 
 $$
f(t)=\sum\limits_{k=0}^d {f_kG_k^{(n)}(t)}, \, \, f_k\ge0 \,  \mbox{ for all  }  \;  k=1,,..,d. 
$$
Then 
 $$
 S_f(C):=\sum\limits_{(c,c')\in C^2}{f(c\cdot c')}= f_0N^2+\sum\limits_{k=1}^d {f_kM_k(C)}\ge f_0N^2. \eqno (1.2) 
 $$
 
 The {\em distance distribution} of $C$ with respect to $u\in C$ is the system of numbers $\{A_t(u): -1\le t \le 1\}$, where 
$$
A_t(u):=|\{v\in C: v\cdot u=t\}|,
$$ 
and the {\em distance distribution} of $C$ is the system of numbers $\{A_t: -1\le t \le 1\}$, where 
$$
A_t(C)=A_t:=\frac{1}{N}\sum\limits_{u\in C}{A_t(u)}. 
$$ 

Denote 
$$
R_f(C):= \sum\limits_{t\in[-1,1)} {f(t)A_t(C)}.
$$
Then 
$$
S_f(C)=NR_f(C) + Nf(1)
$$
and (1.2) implies 
$$
R_f(C)\ge f_0N-f(1). \eqno (1.3)
$$
 
 \medskip
  
The semidefinite programming (SDP) method for spherical codes was proposed by Bachoc and Vallentin \cite{bac08a} with further applications and extensions in \cite{bac09a,bac09b, cohn22, cohn12, LMOV, mac16,mv10,mus14,musSDP19}.

The positive--semidefinite property of Gegenbauer polynomials yields  the positive--semidefinite property of matrices $S_k^n$. Now  consider polynomials $F$ that were defined by Bachoc and Vallentin. Let $F (t, u, v)$ be a symmetric polynomial with expansion
$$
F (t, u, v) =\sum\limits_{k=0}^d {\langle H_k, S^n_k (t, u, v)\rangle}
$$
in terms of the matrices $S^n_k$. 

Suppose that all matrices $H_k$ with $k>0$ are positive semidefinite and for a given $F_0
\in \mathbb R$, $H_0 - F_0E_0$ is also positive semidefinite. (Here $E_0$ denote a matrix whose only nonzero entry is the top left corner which
contains 1).  Then 
$$
 S_F(C): =  \sum\limits_{(x,y,z)\in C^3}{F(x\cdot y,x\cdot z,y\cdot z)}\ge F_0N^3. \eqno (1.4)
$$

 \medskip
 
 In \cite{musSDP19} we  consider an  extension of known semidefinite and linear programming upper bounds for spherical codes and a version of this bound for distance graphs. This paper is a continuation and extension of \cite{musSDP19}. We show how the bounds from \cite{musSDP19} can be applied to the distance distribution of spherical codes.
 
 In Section 2 for $(N,n,T)$ spherical codes $C$ we present a general 3-point bound, see Theorem \ref{th3p}. Actually, this theorem can be considered as  a lower bound for $E_g(C)$ (or equivalently for  $R_g(C)$, see Theorem \ref{thDD}), where $g:T\to \mathbb R$. We obtain functions $g$ on $T\subset [-1,1)$ that can play the same role as a nonnegative linear combination of Gegenbauer polynomials on $[-1,1]$. 
 
 In Section 4 we show that this method can work effectively. For $T=[-1,0.5]$ we consider two polynomials $g_1$ and $g_2$ that found using the SDP.

The expansion of the first polynomial in Gegenbauer polynomials has negative coefficients. However, the resulting boundary for it is almost exact and with its help we get a shorter solution to the kissing number problem in dimension 4. 

The second polynomial has only positive coefficients, but the SDP bound for it is much stronger than (1.3). 

\medskip 

Section 5 contains several possible applications of Theorems \ref{th3p} and \ref{thDD} and their generalizations.

 
  \section{General bounds for spherical codes}
 
 Let  $C$ be an $N$--element subset of the unit sphere ${\mathbb S}^{n-1}\subset {\mathbb R}^{n}$.  Denote 
  $$
I(C):=\{t=x\cdot y\,|\,x,y\in C\, \&\, x\ne y\}.   
  $$
Let  $T\subset [-1,1)$. We say that $C$  is an $(N,n,T)$ {\em spherical code} if $I(C)\subset T$. 

 Let  $g$ be a real function on  $I(C)$.  Define
 $$
 E_g(C):=\sum\limits_{(x,y)\in C^2, x\ne y}{g(x\cdot y)}
 $$

\subsection{General 2-point bound}

  \begin{thm} \label{th2p}   Let  $C$ be an  $(N,n,T)$ spherical code.
  Suppose $g:T \to \mathbb R$, $f:[-1,1] \to \mathbb R$ and $f_0\in \mathbb R$ are such that
  \begin{enumerate}
 \item $f(t) \le g(t)$ for all $t\in T$.
 \item  $S_f(C)\ge f_0N^2$.
   \end{enumerate}
Then 
$$
Nf(1)+E_g(C)\ge f_0N^2. 
$$
\end{thm}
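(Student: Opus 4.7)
The plan is to decompose $S_f(C)$ into its diagonal and off-diagonal parts and then apply the pointwise inequality $f \le g$ on $T$ to the off-diagonal sum. This is a direct unpacking of the two hypotheses rather than a deep argument, so the work is mostly bookkeeping.

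First I would write
\[
S_f(C) = \sum_{(x,y)\in C^2} f(x\cdot y) = \sum_{x\in C} f(x\cdot x) + \sum_{\substack{(x,y)\in C^2 \\ x\ne y}} f(x\cdot y).
\]
Since $\|x\|=1$, the diagonal contribution is exactly $Nf(1)$. For the off-diagonal sum, the defining property of an $(N,n,T)$ spherical code is $I(C)\subset T$, so for every pair $x\ne y$ in $C$ the inner product $x\cdot y$ lies in $T$, which is precisely the domain on which hypothesis (1) gives $f(t)\le g(t)$. Termwise comparison then yields
\[
\sum_{\substack{(x,y)\in C^2 \\ x\ne y}} f(x\cdot y) \;\le\; \sum_{\substack{(x,y)\in C^2 \\ x\ne y}} g(x\cdot y) \;=\; E_g(C).
\]

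Combining the two pieces gives $S_f(C)\le Nf(1)+E_g(C)$. Finally, invoking hypothesis (2), namely $S_f(C)\ge f_0 N^2$, produces the chain
\[
f_0 N^2 \;\le\; S_f(C) \;\le\; Nf(1)+E_g(C),
\]
which is the desired conclusion.

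There is no real obstacle here: the substantive content sits entirely in hypothesis (2) (which in practice will be verified via the positive-semidefiniteness argument behind (1.2), where a nonnegative Gegenbauer expansion of $f$ forces $S_f(C)\ge f_0 N^2$ with $f_0$ the constant coefficient). The theorem itself is just the mechanism that transfers such a global bound on $f$ into a bound on $E_g(C)$ for any majorant $g$ of $f$ on the relevant inner-product set $T$, so the only thing to be careful about is keeping the diagonal term $Nf(1)$ separated from the off-diagonal sum, since $1\notin T$ in general and hypothesis (1) is not assumed to hold at $t=1$.
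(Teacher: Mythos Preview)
Your proof is correct and follows exactly the same approach as the paper's own proof: split $S_f(C)$ into the diagonal term $Nf(1)$ and the off-diagonal sum $E_f(C)$, then use hypothesis (1) to bound $E_f(C)\le E_g(C)$ and hypothesis (2) to obtain $f_0N^2\le S_f(C)$. The paper simply compresses these steps into the single chain $f_0N^2 \le S_f(C)=Nf(1)+E_f(C)\le Nf(1)+E_g(C)$.
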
  
\begin{proof} Note that  for all $x\in C\subset \mathbb S^{n-1}$ we have $x^2=x\cdot x=1$. Then 
$$
f_0N^2 \le S_f(C)=Nf(1)+E_f(C)\le Nf(1)+E_g(C). 
$$
\end{proof}

Note that if  $f$ is a non-negative linear combination of the Gegenbauer polynomials then  (1.2) implies that $f$ satisfies assumption 2 in Theorem \ref{th2p}. 


\medskip

Suppose  $f_0>0$ and $g(t)=0$ for all $t\in T$. Since $E_g(C)=0$, the  theorem yields that 

$$N \le \frac {f(1)}{f_0}.$$
This bound  is called the {\em linear programming (LP)} or {\em Delsarte's bound} for spherical codes. 

Let $q:(0,4] \to \mathbb R$ be any function. Then for positive semidefinite $f$ and $g(t)=q(2-2t)$ Theorem \ref{th2p}  implies that \\ {\em Every set of $N$ points on ${\mathbb S}^{n-1}$ has potential energy  $E_q(C):=E_g(C)$ at least}
$$
f_0N^2-Nf(1). 
$$
This fact first proved by Yudin \cite{Yudin} and has a lot of applications.

\subsection{General 3-point bound}
 The Gram matrix of a set of vectors $v_1, … , v_n$ in $\mathbb R^d$ is the matrix of inner products, whose entries are given by the inner product $G_{ij}=v_i\cdot v_j$. The Gram matrix is symmetric and positive semidefinite. Moreover, a symmetric matrix $M$  is positive semidefinite if and only if it is the Gram matrix of some vectors $v_1, … , v_n$. 
 
 Let
 $$
M_3=
\left(
\begin{array}{ccccc}
1 & u &  v \\
u & 1  & t \\
 v &  t &  1 
\end{array}
\right).
$$
If this matrix is positive semidefinite ($M_3\succeq 0$), then there are three distinct points $x, y, z$ in  $\mathbb S^2$  such that  $t=x\cdot y, u=x\cdot z$, and $v=y\cdot z$.  
It is easy to see that $M_3\succeq 0$ if and only if $t,u,v\in[-1,1]$ and  $\det(M_3)=1+2tuv-t^2-u^2-v^2\ge0$.

This fact explains the following definition. 
 $$
D_3(T):=\left\{(t,u,v): t,u,v\in T \,\& \, 1+2tuv-t^2-u^2-v^2\ge0\right\}, \; \; T\subset [-1,1).
 $$

  \begin{thm} \label{th3p} Let  $C$ be an  $(N,n,T)$ spherical code and $F:[-1,1]^3 \to \mathbb R$ be a symmetric function. Suppose $f:T \to \mathbb R$ and $g:T \to \mathbb R$,  are such that 
\begin{enumerate}
 \item $F(1,t,t) \le f(t)$ for all $t\in T$,
 \item $F(t,u,v)\le g(t)+g(u)+g(v)$ for all $(t,u,v)\in D_3(T)$.
   \end{enumerate}
If 
$
  S_F(C) \ge F_0N^3, 
$
where $F_0\in \mathbb R$, then
$$
NF(1,1,1)+3E_f(C)+(3N-6)E_g(C)\ge F_0N^3 . 
$$
\end{thm}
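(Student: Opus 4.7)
The plan is to expand $S_F(C)=\sum_{(x,y,z)\in C^3} F(x\cdot y, x\cdot z, y\cdot z)$ by stratifying $C^3$ according to how many of the three entries coincide, bound each stratum from above using hypotheses 1 and 2, and finally invoke the assumed lower bound $S_F(C)\geq F_0 N^3$.

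The first stratum is the diagonal $x=y=z$, which contributes exactly $NF(1,1,1)$. The second stratum consists of triples with exactly two equal entries, partitioned into the three symmetric configurations $x=y\neq z$, $x=z\neq y$, $y=z\neq x$. In each configuration the vector of pairwise inner products is a permutation of $(1,t,t)$ for some $t\in T$; by symmetry of $F$ and hypothesis 1 the summand is bounded by $f(t)$. Summing over the $N(N-1)$ ordered pairs in each of the three configurations yields the combined upper bound $3E_f(C)$.

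For the third stratum — triples of pairwise distinct points — the inner-product vector $(t,u,v)$ lies in $D_3(T)$: the entries lie in $T$ by definition of an $(N,n,T)$ code, and the determinant inequality $1+2tuv-t^2-u^2-v^2\geq 0$ holds automatically because it is the determinant of the Gram matrix of $x,y,z$. Hypothesis 2 then gives $F(t,u,v)\leq g(t)+g(u)+g(v)$. Summing over ordered triples of distinct points, each ordered pair $(x,y)$ with $x\neq y$ appears in the $g(x\cdot y)$-term for exactly $N-2$ such triples (indexed by the choice of third point), and by the symmetry of the sum the same count applies to the other two $g$-summands, giving the upper bound $3(N-2)E_g(C)$.

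Combining the three contributions yields
$$S_F(C) \leq NF(1,1,1) + 3E_f(C) + (3N-6)E_g(C),$$
and the hypothesis $S_F(C)\geq F_0 N^3$ completes the proof. The argument is essentially bookkeeping; the only place that merits care is the multiplicity $N-2$ in the third stratum, together with the observation that both $F$ and $D_3(T)$ are invariant under coordinate permutations so hypothesis 2 can be applied to each ordered triple uniformly.
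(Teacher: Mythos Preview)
Your proof is correct and follows essentially the same approach as the paper: stratify $C^3$ into the diagonal, triples with exactly two equal points, and triples of pairwise distinct points, bound each stratum by $NF(1,1,1)$, $3E_f(C)$, and $3(N-2)E_g(C)$ respectively, and combine with $S_F(C)\ge F_0N^3$. If anything, you give slightly more detail than the paper does (e.g., explicitly justifying $(t,u,v)\in D_3(T)$ via the Gram-matrix determinant and spelling out the $N-2$ multiplicity).
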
  

\begin{proof}
We have $S_F(C)=S_1+S_2+S_3$, where 
$$
S_1=\sum\limits_{x=y=z}{F(x\cdot y,x\cdot z,y\cdot z)},
$$
$$
S_2=\sum\limits_{(x,y,z)\in H}{F(x\cdot y,x\cdot z,y\cdot z)}, \; H=\{(x,y,z)\in C^3\,|\,x=y\ne z\,or\, x=z\ne y\, or\, x\ne y=z\},
$$
$$
S_3=\sum\limits_{x\ne y\ne z \ne x}{F(x\cdot y,x\cdot z,y\cdot z)}. 
$$

 Since $x^2=1$ for all $x\in C$, we have 
$$
S_1=\sum\limits_{x\in C}{F(x^2,x^2,x^2)}=NF(1,1,1),
$$
$$
S_2=3\sum\limits_{(x,y)\in C^2,x\ne y}{F(1,x\cdot y,x\cdot y)}\le 3\sum\limits_{x\ne y}{f(x\cdot y)} =3E_f(C)). 
$$

 By assumption 2
 $$
 S_3\le \sum\limits_{x\ne y\ne z \ne x}{(g(x\cdot y)+g(x\cdot z)+g(y\cdot z))}=3(N-2)E_g(C). 
 $$
 Thus
 $$
 S_F(C)\le NF(1,1,1)+3E_f(C)+3(N-2)E_g(C). 
 $$
\end{proof}

  \begin{cor} \label{cor3p} Under the assumptions of Theorem \ref{th3p} let $f(t)=p(t)-q(t)$ with $p:T\to \mathbb R$ and $q:[-1,1]\to \mathbb R$. If $S_q(C)\ge 0$, then  
$$
NF(1,1,1)+3Nq(1)+3E_p(C)+(3N-6)E_g(C)\ge F_0N^3 . 
$$
\end{cor}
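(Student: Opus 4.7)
The plan is to reduce the claim to Theorem~\ref{th3p} by substituting $f = p - q$ and then absorbing the awkward $E_q(C)$ term using the hypothesis $S_q(C) \ge 0$. The motivation is that $p-q$ need not satisfy the nonnegativity or pointwise bounds required of $f$ on its own, but the missing ``mass'' of $q$ can be controlled globally by the positive-semidefinite inequality $S_q(C) \ge 0$ (typically coming from $q$ being a nonnegative Gegenbauer combination, cf.\ (1.2)).

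First, I would use linearity of $h \mapsto E_h(C)$ on $T$ to write
$$E_f(C) = E_p(C) - E_q(C).$$
Substituting $f = p - q$ into the conclusion of Theorem~\ref{th3p} then gives
$$NF(1,1,1) + 3E_p(C) - 3E_q(C) + (3N-6)E_g(C) \ge F_0 N^3. \qquad (*)$$

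Next, I would invoke the identity
$$S_q(C) = \sum_{(c,c')\in C^2} q(c\cdot c') = Nq(1) + E_q(C),$$
obtained by splitting the double sum into diagonal terms (where $c = c'$ and $c\cdot c'=1$) and off-diagonal terms. The assumption $S_q(C) \ge 0$ therefore yields $-E_q(C) \le Nq(1)$, equivalently $-3E_q(C) \le 3Nq(1)$. Replacing the term $-3E_q(C)$ in $(*)$ by the larger quantity $3Nq(1)$ only increases the left-hand side, so the inequality is preserved, producing
$$NF(1,1,1) + 3Nq(1) + 3E_p(C) + (3N-6)E_g(C) \ge F_0 N^3,$$
as desired.

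The argument is essentially a bookkeeping step once Theorem~\ref{th3p} is in hand; no real obstacle arises. The only point requiring care is the direction of the substitution: because $-E_q(C)$ is being \emph{upper}-bounded by $Nq(1)$, it must appear on the side of the inequality that is being enlarged, which is exactly what happens on the left of $(*)$.
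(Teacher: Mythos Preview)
Your proof is correct and matches the paper's argument essentially step for step: both split $E_f(C)=E_p(C)-E_q(C)$ and then use $S_q(C)=Nq(1)+E_q(C)\ge 0$ to replace $-3E_q(C)$ by $3Nq(1)$ on the larger side of the inequality. The only cosmetic difference is that the paper re-enters the proof of Theorem~\ref{th3p} at the intermediate bound $S_2\le 3E_f(C)$ rather than invoking its final conclusion, which amounts to the same thing.
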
 

\begin{proof}
$$
S_2\le 3E_f(C) =3E_p(C)-3E_q(C)). 
$$
Since $q(C)=Nq(1)+E_q(C)\ge0$, we have 
$$
S_2\le 3Nq(1)+3E_p(C). 
$$
 Thus
 $$
 S_F(C)\le NF(1,1,1)+3Nq(1)+3E_p(C)+3(N-2)E_g(C). 
 $$
\end{proof}

 \begin{cor} \label{cor22} Under the assumptions of Theorem \ref{th3p} let $f(t)=B+2g(t)-q(t)$ with $q:[-1,1]\to \mathbb R$. If $S_q(C)\ge 0$, then  
$$
F(1,1,1)+3q(1)+3(N-1)B+3E_g(C)\ge F_0N^2. 
$$
\end{cor}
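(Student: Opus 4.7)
The plan is to combine the estimate on $S_F(C)$ from the proof of Theorem \ref{th3p} with the substitution $f(t)=B+2g(t)-q(t)$ and the hypothesis $S_q(C)\ge 0$, essentially mimicking how Corollary \ref{cor3p} was obtained.

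First, I would expand $E_f(C)$ directly from the definition: since the sum ranges over the $N(N-1)$ ordered pairs $(x,y)\in C^2$ with $x\ne y$, we get $E_f(C)=B\,N(N-1)+2E_g(C)-E_q(C)$. Next, using $S_q(C)=Nq(1)+E_q(C)\ge 0$, I obtain $-E_q(C)\le Nq(1)$, and therefore
$$
E_f(C)\le B\,N(N-1)+2E_g(C)+Nq(1).
$$

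Then I would invoke the chain of inequalities from the proof of Theorem \ref{th3p}, namely $S_F(C)\le NF(1,1,1)+3E_f(C)+3(N-2)E_g(C)$, plug in the bound just obtained for $E_f(C)$, and collect the coefficients of $E_g(C)$: the $6E_g(C)$ from $3E_f(C)$ combines with $3(N-2)E_g(C)$ to give $3N\,E_g(C)$. Together with $S_F(C)\ge F_0N^3$ this yields
$$
F_0N^3\le NF(1,1,1)+3BN(N-1)+3Nq(1)+3N\,E_g(C).
$$

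Finally, dividing through by $N$ gives the claimed inequality. There is no real obstacle here; the only bookkeeping step is to track the coefficient of $E_g(C)$ carefully after substitution, since the $2g(t)$ term in $f$ contributes an extra $6E_g(C)$ that merges with the existing $3(N-2)E_g(C)$ to exactly cancel the $-6$ and leave $3N$ in front of $E_g(C)$.
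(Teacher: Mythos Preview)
Your proof is correct and follows essentially the same approach as the paper: the paper routes the argument through Corollary~\ref{cor3p} by writing $p(t)=B+2g(t)$ and bounding $S_2\le 3Nq(1)+3E_p(C)=3Nq(1)+3N(N-1)B+6E_g(C)$, whereas you bound $E_f(C)$ directly and plug into the inequality $S_F(C)\le NF(1,1,1)+3E_f(C)+3(N-2)E_g(C)$ from the proof of Theorem~\ref{th3p}. The computations are identical line by line, only the packaging differs.
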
  
\begin{proof}
$$
S_2\le 3Nq(1)+3E_p(C) =3Nq(1)+3\sum\limits_{x\ne y}{(B+2g(x\cdot y))} =3Nq(1)+3N(N-1)B+6E_g(C). 
$$
Then 
 $$
F_0N^3\le S_F(C)\le NF(1,1,1)+3Nq(1)+3N(N-1)B+6E_g(C)+3(N-2)E_g(C)
 $$
 $$
 =NF(1,1,1)+3Nq(1)+3N(N-1)B+3NE_g(C). 
 $$
Thus
$$
F(1,1,1)+3q(1)+3(N-1)B+3E_g(C)\ge F_0N^2. 
$$
\end{proof}

Note that if  $(F-F_0)$ and $q$ are positive semidefinite then $S_F(C)\ge F_0N^3$ and $q(C)\ge0$. This makes it possible to find new bounds for $N$ and $E_g(C)$ using the SDP.  We will look at these methods in more detail in later sections.

\medskip

Suppose $g(t)\equiv0$ and $F_0>0$. Then Corollary \ref{cor22} yields
$$
N^2\le \frac{F(1,1,1)+3q(1)+3(N-1)B}{F_0}. 
$$
This inequality as well as Corollary \ref{cor3p}  with $g=0$  first were proposed by Bachoc and Vallentin \cite{bac08a,bac09a} with further applications and extensions in \cite{bac09b, cohn12, mac16,mv10,mus14}.  In particular, Cohn and Woo \cite{cohn12} got three-point bounds for potential energy minimization.

 \subsection{General $k$-point bound}
 
 Theorems  \ref{th2p} and  \ref{th3p} can be extensed for all $k$: $2\le k \le n-2$. Theorem 5.4 from our paper  \cite{mus14}  is a particular case of  this general theorem.  
 
 It is clear how to derive a generalization of Theorem \ref{th3p}, see some details in Section 5  \cite{mus14}  and  \cite{LMOV}. However, the resulting formulas for $k>3$ are quite cumbersome. We decided not to present the general theorem here even for the case $k=4$.


\section{SDP bounds for the distance distribution}

 Let  $C$ be an  $(N,n,T)$ spherical code. In the Introduction we defined  $A_t(C)$ and  $R_f(C)$. It is clear that  $A_1=1$, $A_t=0$ for all $t\ne 1$ and $t\notin T$, 
$$\sum\limits_{t \in T}{A_t}=N-1 \quad \mbox{and} \quad E_f(C)=NR_f(C). $$


\medskip

The following theorem is a restatement of Theorem  \ref{th3p}  for the distance distribution of spherical codes.

\begin{thm} \label{thDD} Let  $C$ be an  $(N,n,T)$ spherical code and $F:[-1,1]^3 \to \mathbb R$ be a symmetric function. Suppose $h:T \to \mathbb R$ and $g:T \to \mathbb R$,  are such that 
\begin{enumerate}
 \item $h(t)+h_0+F(1,t,t) \le 2g(t)$ for all $t\in T$,
 \item $F(t,u,v)\le g(t)+g(u)+g(v)$ for all $(t,u,v)\in D_3(T)$.

   \end{enumerate}
If $S_F(C) \ge F_0N^3$, then
$$
R_g(C)= \sum\limits_{t\in T}{A_t\,g(t)} \ge \frac{1}{3}F_0N + \frac{1}{3}h_0- \frac{1}{3N}F(1,1,1)+\frac{1}{N^2}E_h
$$
\end{thm}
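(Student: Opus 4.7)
The theorem is a near-immediate consequence of Theorem~\ref{th3p} under a natural substitution. The plan is to introduce
$$f(t) := 2g(t) - h(t) - h_0, \qquad t \in T,$$
and verify the hypotheses of Theorem~\ref{th3p}: assumption~1 of the present theorem, rewritten as $F(1,t,t) \le 2g(t) - h(t) - h_0 = f(t)$, is exactly hypothesis~1 of Theorem~\ref{th3p}; assumption~2 is identical in the two theorems, and the hypothesis $S_F(C) \ge F_0 N^3$ is shared. Consequently Theorem~\ref{th3p} applies and delivers
$$
NF(1,1,1) + 3 E_f(C) + (3N - 6) E_g(C) \ge F_0 N^3.
$$

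The second step is the expansion of $E_f(C)$. Because the sum defining $E_f(C)$ runs over the $N(N-1)$ ordered pairs $(x,y) \in C^2$ with $x \ne y$, linearity yields
$$
E_f(C) = 2 E_g(C) - E_h(C) - h_0 \, N(N-1),
$$
and inserting this back into the previous inequality collects every $E_g(C)$ contribution into a single $3N \, E_g(C)$ term. Solving then produces an explicit lower bound for $E_g(C)$ in terms of $F_0$, $F(1,1,1)$, $E_h(C)$, and $h_0$.

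The third step is to pass from $E_g(C)$ to $R_g(C)$. By the definitions of $A_t(u)$ and $A_t$, a rearrangement of the double sum gives
$$
E_g(C) = \sum_{u \in C} \sum_{v \in C,\, v \ne u} g(u \cdot v) = \sum_{u \in C} \sum_{t \in T} A_t(u)\, g(t) = N \sum_{t \in T} A_t\, g(t) = N R_g(C),
$$
so dividing the bound from the previous step by~$N$ yields the claimed inequality for $R_g(C)$. There is no genuine obstacle in this argument: all of the estimation has already been carried out inside Theorem~\ref{th3p}, and the present statement is essentially a repackaging under the substitution $f = 2g - h - h_0$; the remaining work is routine algebraic bookkeeping.
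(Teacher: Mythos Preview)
Your approach---set $f(t)=2g(t)-h(t)-h_0$, invoke Theorem~\ref{th3p}, expand $E_f(C)$, and then pass from $E_g$ to $R_g$ via $E_g(C)=N R_g(C)$---is exactly what the paper intends: it gives no separate argument for Theorem~\ref{thDD} and simply declares it a restatement of Theorem~\ref{th3p}.  One remark on the bookkeeping: your (correct) computation actually produces the term $\dfrac{h_0(N-1)}{N}$ in place of the $\dfrac{1}{3}h_0$ printed in the statement.  Since $(N-1)/N>1/3$ for every $N\ge2$, your bound is the stronger one whenever $h_0>0$ (in particular for the applications via Corollary~\ref{cor31}, where $h_0=1$), and hence implies the inequality as stated; the mismatch is almost certainly a typo in the paper rather than a gap in your argument.
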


\begin{cor} \label{cor31}    Under the assumptions of Theorem \ref{thDD} let $F_0=0$ and $h_0=1$. If $S_h(C)\ge0$ then 
$$
R_g(C)= \sum\limits_{t\in T}{A_t\,g(t)} \ge B(N):= \frac{N-M}{3N}, \quad M=F(1,1,1)+3h(1)
$$

\end{cor}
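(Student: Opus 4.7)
The plan is to read the conclusion of Theorem \ref{thDD} off literally under the stated substitutions and then dispose of the $E_h$ term using the hypothesis $S_h(C)\ge 0$.

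First I would substitute $F_0=0$ and $h_0=1$ into the inequality provided by Theorem \ref{thDD}. This immediately gives
$$
R_g(C)\ge \frac{1}{3}-\frac{1}{3N}F(1,1,1)+\frac{1}{N^2}E_h(C),
$$
so the only remaining task is to produce a lower bound on $E_h(C)/N^2$ that is independent of $C$. This is exactly where the new hypothesis $S_h(C)\ge 0$ enters.

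Next I would use the decomposition $S_h(C)=Nh(1)+E_h(C)$, which follows from splitting the sum in the definition of $S_h(C)$ into the diagonal pairs $c=c'$ (each contributing $h(1)$, of which there are $N$) and the off-diagonal pairs (which by definition sum to $E_h(C)$). Since $S_h(C)\ge 0$, this rearranges to $E_h(C)\ge -Nh(1)$, hence $E_h(C)/N^2\ge -h(1)/N$.

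Plugging this in and combining over the common denominator $3N$ gives
$$
R_g(C)\ge \frac{1}{3}-\frac{F(1,1,1)}{3N}-\frac{h(1)}{N}=\frac{N-F(1,1,1)-3h(1)}{3N}=\frac{N-M}{3N},
$$
which is the claimed bound $B(N)$. The proof is essentially a direct substitution, so there is no genuine obstacle: the only step that requires a moment's thought is recognizing that $S_h(C)=Nh(1)+E_h(C)$ so that the positive semidefiniteness of $h$ (i.e.\ $S_h(C)\ge 0$) bounds $E_h$ from below by $-Nh(1)$, producing the extra $-3h(1)$ term in the numerator of the final expression.
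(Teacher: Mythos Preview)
Your proof is correct and follows exactly the paper's argument: use $S_h(C)=Nh(1)+E_h(C)\ge 0$ to get $E_h(C)\ge -Nh(1)$, then substitute into Theorem \ref{thDD} with $F_0=0$, $h_0=1$. The only quibble is terminological: in your final paragraph you call the hypothesis ``positive semidefiniteness of $h$,'' but strictly speaking the corollary only assumes $S_h(C)\ge 0$ for this particular code, not for all codes.
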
 
\begin{proof} Since $S_h(C)=Nh(1)+E_h(C)\ge0$, we have $E_h(C)\ge - Nh(1)$. Thus, Theorem \ref{thDD} yields the inequality. 
\end{proof}



\section{Some applications. A shorter proof of the kissing number problem in four dimensions.}

In this section we show that the method discussed in Sections 2 and 3 works effectively. We consider polynomials $g_1$ and $g_2$ found using SDP for the corresponding inequalities on the distribution of distances of spherical codes cannot be found using LP bounds. These polynomial satisfy assumptions of Corollary \ref{cor31} with $n=4$ and $T=[-1,0.5]$.  Using $g_1$ and the same method from our paper \cite{mus08a}, discarding the most difficult case of five points we get a shorter proof that the kissing number in four dimensions $k(4) = 24$.

\medskip 

\noindent {\bf Remark.} I would like to note that these polynomials were found by Maria Dostert during our work on the uniqueness problem  of the maximum kissing arrangement in dimension 4 \cite{uniq24}. 
 Maria also found several tight polynomials for other intervals. To do this, she had to overcome a large number of technical obstacles and, along the way, develop and improve an algorithm for finding SDP bounds for the distance distribution.
 
\medskip 

\subsection{Two examples.}
Let $T=[-1,0.5]$. In this case we denote a  spherical code $(N,n,T)$ by $(N,n,\pi/3)$.   Here we consider the bound given by  Corollary \ref{cor31} for $(N,4,\pi/3)$ spherical codes. 



\noindent {\bf 1.} Let the expansion of $g_1$ in Gegenbauer polynomials $G_k^{(4)}$ have the following coefficients:\\
$ [c_0,...,c_{22}]$= [-0.5438, -2.0024, -3.8887,   -5.6414,   -6.7025,   -6.8508,   -6.0698,  -4.6566,   -3.0047, \\  -1.4686,   -0.3226,    0.3704,    0.6521,    0.6486,  0.5104,    0.3361,    0.1911,    0.0963,  0.0411,    0.0157,    0.0056,   0.001,    0.0004]. 


We observe that $g_1(-1)=0.02$ and $g_1(t)\le0$ for all $t\in[-\sqrt{2}/2,1/2]$. Fig. 1 shows the graph of $g_1$ with normalization $\tilde g_1(-1)=100$. Since there are negative coefficients $c_k$, we cannot use the LP (Delsarte)  bound.
The  SDP bound  in Corollary \ref{cor31} gives  $M=M_1:= 22.5689$.  

Let $B_1(N):=(N-M_1)/N$.  Then $B_1(25)= 0.0324$ and  $B_1(24)=0.0199$. 

\medskip

\noindent {\bf 2.} The coefficients $[c_0,...,c_{22}]$ in the expansion of $g_2$ in  $G_k^{(4)}$  \\
 = [0.222, 0.8648,    1.8875,    3.1425,    4.5059,    5.7052,    6.5739,  6.9286,    6.7119,    6.0157,    4.9575,    3.7767,    2.6446,    1.6914,  0.9947,    0.5249,    0.2524,    0.1097,    0.0409,    0.0153,    0.0042,  0.001,    0.0002]. 


	\begin{figure} 
\centering
	\includegraphics[scale=0.42]{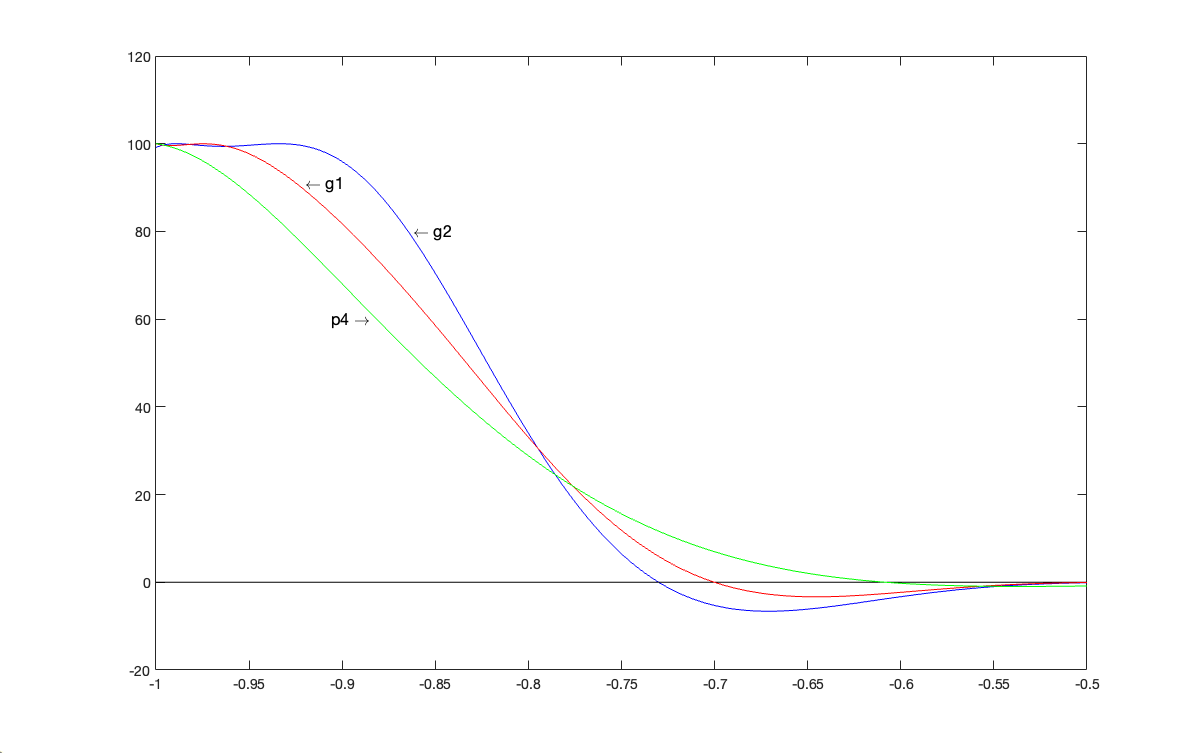}
\caption{Graphs of $\tilde g_1$, $\tilde g_2$ and $\tilde p_4$}
\end{figure}


We have $g_2(-1)=0.02$ and $g_2(t)\le0$ for all $t\in[-0.73,0.5]$. Fig. 1 shows the graph of $g_2$ with normalization $\tilde g_2(-1)=100$.

\medskip

This case is very interesting. Note that all coefficients of $g_2$ are positive and therefore we can apply bound (1.3): 
$$
R_{g_2}(C) \ge LP(N):=c_0N-g_2(1)=0.222N-57.5714.
$$
On the other side the SDP bound in  Corollary \ref{cor31} gives
$$
R_{g_2}(C) \ge B_2(N)=\frac{N-M}{3N}=\frac{N-22.6452}{3N}
$$
Then for the most interesting cases $N=24$ and  $N=25$   we have 
$$
B_2(24)=0.0188 >-52.2431=LP(24), \quad B_2(25)=0.0314 > -52.0211=LP(25). 
$$

\subsection{A shorter proof of the kissing number problem in four dimensions.}
In \cite{mus08a} we proved that $k(4)=24$. Let $t_0\in (-1,-0.5)$ and $f(t)$ be is a nonnegative combination of $G_k^{(4)}$ with coefficients $c_k\ge0$ such that $f(t)\le 0$ for all $t\in[t_0,0.5)$.  
From  \cite[Theorem 1]{mus08a} follows that 
$$k(4)\le \frac{1}{c_0} \max\{h_0,h_1,...,h_\mu,\}$$
where $\mu=A(4,\pi/3,\psi_0), \, \psi_0=\arccos|t_0|$, and $h_m$ is the maximum of $H_f(Y)=f(1)+f(e_1,y_1)+...+f(e_1,y_m)$ over all configurations of $m$ unit vectors $y_j$ in the spherical cap in $\mathbb S^3$ given by $e_1\cdot y_j\le t_0$ whose pairwise scalar products are at most $\frac{1}{2}$.

We considered a polynomial $p_4$ (see Fig. 1) with $t_0=-0.6058$. In this case $\mu=6$. 

Technically, the most difficult case turned out to be m=5. This case takes up a significant part of the proof.  We tried to exclude this case and reduce $t_0$. However, numerous experiments with an extended LP bound did not lead to success.

\begin{thm} $k(4)=24$
\end{thm}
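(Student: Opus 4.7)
The plan is to mirror the proof of $k(4)=24$ in \cite{mus08a} but to replace the LP polynomial $p_4$ by the SDP polynomial $g_1$. The point is that $g_1$ has negative Gegenbauer coefficients---so the classical bound (1.3) is useless---yet Corollary~\ref{cor31} still yields a strictly positive lower bound on $R_{g_1}(C)$. Crucially, $g_1\le 0$ on the \emph{larger} interval $[-\sqrt2/2,1/2]$ than $p_4$ does, which shrinks the ``bad'' spherical cap that must be analyzed locally, and this is exactly what cuts out the hardest subcase of \cite{mus08a}.

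Assume for contradiction that $C\subset\mathbb S^3$ is a kissing configuration with $N:=|C|\ge 25$; the lower bound $k(4)\ge 24$ is classical (e.g.\ the $D_4$ root system). The SDP certificate associated with $g_1$ (the auxiliary $F$ and $h$ exhibited by the numerical computation) gives $M_1=F(1,1,1)+3h(1)=22.5689$, so Corollary~\ref{cor31} yields
$$
R_{g_1}(C)\ \ge\ B_1(25)\ =\ \frac{25-M_1}{3\cdot 25}\ =\ 0.0324.
$$
Since $g_1(t)\le 0$ on $[-\sqrt2/2,1/2]$, only the points of $S(u):=\{v\in C\setminus\{u\}:u\cdot v<-\sqrt2/2\}$ contribute nonnegatively to $\sum_{v\ne u}g_1(u\cdot v)$, and therefore
$$
0.0324\ \le\ R_{g_1}(C)\ \le\ \frac{1}{N}\sum_{u\in C}\sum_{v\in S(u)}g_1(u\cdot v)\ \le\ \max_{0\le m\le\mu}h_m,
$$
where $\mu:=A(4,\pi/3,\pi/4)$ and $h_m$ is the supremum of $\sum_{i=1}^m g_1(e_1\cdot y_i)$ taken over $m$-point systems in the spherical cap $\{y\in\mathbb S^3:e_1\cdot y\le -\sqrt2/2\}$ with pairwise inner products at most $\tfrac12$.

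The next step is to show $\mu\le 4$ by a short geometric argument: projecting the vectors of $S(u)$ onto the hyperplane orthogonal to $-u$ and renormalizing, the constraints $y_i\cdot y_j\le\tfrac12$ together with $(-u)\cdot y_i\ge \sqrt2/2$ force the renormalized projections in $\mathbb R^3$ to have pairwise inner products at most $0$, and at most four such unit vectors can coexist (the extremal configuration is a regular tetrahedron). This is exactly what eliminates the cases $m\in\{5,6\}$, i.e., the long $m=5$ analysis that dominated the length of \cite{mus08a}.

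What remains is to verify $h_m<0.0324$ for $m\in\{0,1,2,3,4\}$. The cases $m=0,1$ are routine: $h_0=0$, and for $m=1$ one checks the one-variable inequality $g_1(t)<0.0324$ on $[-1,-\sqrt2/2]$, which is consistent with the given value $g_1(-1)=0.02$. For $m=2,3,4$ one parameterizes configurations by a few angular variables, writes down the KKT conditions for the active constraints $y_i\cdot y_j=\tfrac12$ and $e_1\cdot y_i=-\sqrt2/2$, and encloses the finitely many critical values by interval arithmetic, exactly along the lines of the $m\le 4$ pieces of \cite{mus08a}. I expect $m=4$ to be the main obstacle, since its critical locus has the highest dimension; however, the target $0.0324$ is not especially tight, and in contrast with the eliminated $m=5$ analysis the inequalities can be certified by a moderate-size computation. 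The resulting contradiction with the SDP lower bound forces $N\le 24$, completing the proof.
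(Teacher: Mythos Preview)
Your approach is exactly the paper's: use Corollary~\ref{cor31} with $g_1$ to get $R_{g_1}(C)\ge B_1(25)=0.0324$, then bound $R_{g_1}(C)$ from above via the cap argument of \cite{mus08a} with $t_0=-\sqrt2/2$, and derive a contradiction. Two points deserve correction.

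First, your argument for $\mu\le 4$ is not quite right as written. You pass from the strict condition $u\cdot v<-\sqrt2/2$ defining $S(u)$ to the non-strict ``$(-u)\cdot y_i\ge\sqrt2/2$'' and conclude that the normalized projections $w_i\in\mathbb S^2$ satisfy $w_i\cdot w_j\le 0$. With a non-strict bound the conclusion ``at most four'' is false: the octahedron gives six unit vectors in $\mathbb R^3$ with pairwise inner products in $\{0,-1\}$, and indeed six points can sit on the boundary of the closed cap with all mutual inner products equal to $\tfrac12$ or $0$. The fix is to keep the strict inequality: since $\alpha_i:=(-u)\cdot y_i>\sqrt2/2$, one has $\alpha_i\alpha_j>1/2$, and then $y_i\cdot y_j\le\tfrac12$ forces $w_i\cdot w_j<0$ strictly. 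In $\mathbb R^3$ at most $n+1=4$ unit vectors can have pairwise \emph{strictly} negative inner products, which gives $\mu\le 4$; the limiting configuration is not a regular tetrahedron but the degeneration toward the octahedral boundary.

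Second, your expectation about where the difficulty lies is off. The paper carries out the same case analysis $m=0,1,2,3,4$ and reports that the maximum of $\sum_{v\in S(u)} g_1(u\cdot v)$ is attained at $m=2$ with value $0.0266$, not at $m=4$. So the numerical verification is lighter than you anticipate, and the contradiction $0.0266<0.0324$ closes the argument.
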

\begin{proof} The 24--cell is an example of a kissing arrangement. Then $k(4)\ge 24$.  It remains to prove $k(4)<25$.  Assume the converse: $N\ge25$. 


Let  $C$ be an  $(25,4,\pi/3)$ spherical code. Since $g_1(t)\le0$ for all $t\in[-\sqrt{2}/2,1/2]$, we have $t_0=-\sqrt{2}/2$ and $\mu=4$. Using the same method as in  \cite{mus08a}  we consider the cases $\mu=0, 1, 2, 3, 4$ to find the maximum  of $R_{g_1}(C)=\sum_t A_tg_1(t)$. This maximum is achieved at $\mu = 2$ and is  $0.0266$, i.e. $R_{g_1}(C)<0.0266$. On the other side we have $R_{g_1}(C)>B(25)=0.0324$, a contradiction. 
\end{proof}


\subsection{New bounds for the distance distribution on a $(24,4,\pi/3)$ spherical code}  
Now we consider a maximal kissing arrangement in dimension 4 that is  a $(24,4,\pi/3)$ spherical code.  The long-standing open uniqueness conjecture on this code states that  this arrangement  is isometric to the 24--cell. In fact, the uniqueness conjecture is equivalent to the following 

\medskip

\noindent  {\bf Conjecture.} {\em Let $C$ be a $(24,4,\pi/3)$ spherical code. Then} 
$$
A_{-1}=1, \quad A_{-1/2}=8, \quad A_0=6,  \quad A_{1/2}=8, \ \quad  A_t=0, \, t\ne \pm 1, \pm 1/2, 0.  \eqno (3.1)
$$

Note that in this dimension the equality $A_{-1}=1$ yields (3.1), see \cite{Boy}. Moreover, in \cite{uniq24} we show that every equality in (3.1) implies all other equalities.


\smallskip

Recently, using  Corollaries 1 and 2 from \cite{musSDP19} we found several bounds on the distance distribution of a kissing arrangements in four dimensions \cite{uniq24}. For several intervals these  bounds are sharp. 

 Let  $S \subset[-1,0.5]$. Denote 
 $$
 A(S):=\sum\limits_{t\in S: A_t>0}{A_t}. 
 $$

\begin{thm} \label{t32}  Let $C$ be a $(24,4,\pi/3)$ -- spherical code. Then
$$
A([-1,-0.45])\le9; \quad A([-1, 0.05])\le 15, \quad A([-0.55,0.05])\le 14,  \quad  A([-0.05, 0.5] \le 14, 
$$
$$
A([-1,-0.73])\ge1, \quad A([0.35, 0.5])\ge 8
$$
\end{thm}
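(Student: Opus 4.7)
The plan is to apply Corollary \ref{cor31} (together with the variants from \cite{musSDP19} referenced just before the theorem) to a family of cleverly chosen test functions $g_S$, one for each of the six interval bounds. The key idea is that if we can produce a function $g_S:T\to\mathbb R$ that satisfies the SDP hypotheses of Corollary \ref{cor31} for $n=4$, $N=24$, $T=[-1,0.5]$, and whose values on the target interval $S$ are well-separated from its values on $T\setminus S$, then the resulting inequality $R_{g_S}(C)\ge B(24)$ can be rewritten directly as a linear estimate on $A(S)$ using $\sum_{t\in T}A_t = N-1 = 23$.

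Concretely, for the upper bounds $A(S)\le k$ one arranges $g_S(t)\le\alpha$ on $S$ and $g_S(t)\le\beta$ on $T\setminus S$ with $\alpha<\beta$; combining $R_{g_S}(C)\le\alpha A(S)+\beta(23-A(S))$ with the SDP lower bound $R_{g_S}(C)\ge B(24)=(24-M)/72$ yields
\[
A(S)\le\frac{23\beta-B(24)}{\beta-\alpha}.
\]
The targets $k\in\{9,15,14,14\}$ then determine how sharp the separation $\beta-\alpha$ must be, and the constructions aim at functions where $g_S$ is essentially a smoothed indicator of $T\setminus S$. For the lower bounds $A(S)\ge k$ one reverses the roles, taking $g_S$ supported (approximately) on $S$ so that $\alpha>\beta$ and
\[
A(S)\ge\frac{B(24)-23\beta}{\alpha-\beta}.
\]
In both cases the conversion from the SDP output to an integer bound is just a short arithmetic verification once the constant $M=F(1,1,1)+3h(1)$ associated to $g_S$ has been computed.

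The main obstacle is therefore not the deduction but the \emph{construction} of the six functions $g_S$: each must arise as the second coordinate of a feasible solution $(F,h,g_S)$ of the semidefinite program encoded by the hypotheses of Corollary \ref{cor31}, namely that $F-F_0\cdot 1$ admits a Bachoc--Vallentin expansion with PSD matrix coefficients $H_k$, that $h$ is a nonnegative combination of Gegenbauer polynomials $G_k^{(4)}$ (so $S_h(C)\ge 0$), and that the two pointwise inequalities $h(t)+h_0+F(1,t,t)\le 2g_S(t)$ on $T$ and $F(t,u,v)\le g_S(t)+g_S(u)+g_S(v)$ on $D_3(T)$ are satisfied while the separation $|\alpha-\beta|$ is maximized. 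Running this SDP, discretizing $D_3(T)$ and certifying the resulting pointwise inequalities on $[-1,0.5]$ and $D_3([-1,0.5])$ is precisely the algorithmic machinery developed in \cite{uniq24}; I would simply invoke those computations, exhibit the six resulting Gegenbauer expansions together with the values of $M$, $\alpha$, $\beta$ they yield, and close each of the six inequalities of Theorem \ref{t32} by the one-line arithmetic above.
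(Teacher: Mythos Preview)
Your outline matches the approach the paper points to: the paper does not actually supply a proof of Theorem~\ref{t32} in the text but records it as a result obtained by running the SDP machinery (``using Corollaries 1 and 2 from \cite{musSDP19} we found several bounds \ldots\ \cite{uniq24}''), with the six certified test functions living in the companion work \cite{uniq24}. Your reduction of each inequality to an SDP-certified estimate $R_{g_S}(C)\ge B(24)$ together with the counting identity $\sum_{t\in T}A_t=23$ is exactly the intended mechanism, and your acknowledgement that the substantive content is the \emph{construction and certification} of the six auxiliary functions (which you correctly defer to \cite{uniq24}) mirrors the paper's own stance.

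One small discrepancy worth flagging: the paper attributes the bounds to Corollaries~1 and~2 of \cite{musSDP19} rather than to Corollary~\ref{cor31} of the present paper; these are closely related but not identical formulations, so if you want to stay faithful to the paper you should cite those external corollaries rather than (or in addition to) Corollary~\ref{cor31}.
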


\medskip

\section{Concluding Remarks}

In conclusion, we outline some applications of Theorems \ref{th3p} and \ref{thDD} and their generalizations.

\subsection{Towards a proof of the uniqueness conjecture}
We know that $k(4)=24$ \cite{mus08a}.  However, in dimension 4 the uniqueness of
the maximal kissing arrangement is conjectured to be the 24--cell  but not yet proven. Equivalently, the uniqueness conjecture is the following: 

\smallskip



\smallskip

Denote by $s_d(n)$ the optimal SDP bound on $k(n)$ given by (3) with $\deg(F)=d$ (see \cite{mv10}).   
In the following table it is shown that this minimization problem is a semidefinite program and that every upper bound on $s_d(4)$ provides an upper bound for the kissing number in dimension 4. 

\begin{itemize}

\item $s_7(4) <24.5797$  -- Bachoc \&  Vallentin \cite{bac08a};

\item $s_{11}(4) < 24.10550859$ --   Mittelmann \& Vallentin \cite{mv10};
 
\item $s_{12}(4)< 24.09098111$ \cite{mv10};

\item $s_{13}(4)< 24.07519774$ \cite{mv10};

\item $s_{14}(4) <24.06628391$ \cite{mv10};

\item $s_{15}(4) <24.062758$ -- Machado \&  de Oliveira Filho \cite{mac16};
 
\item $s_{16}(4) <24.056903$  \cite{mac16}.

\end{itemize}

This table shows that $s_d$ with $d>12$ is relatively close to 24, $s_d-24<2/N=1/12$.  We think that our approach which is based on Theorems \ref{th3p} and \ref{thDD} can help to prove the uniqueness conjecture. 




\subsection{Towards a proof of the 24-cell conjecture}

The sphere packing problem asks for the densest packing of ${\mathbb R}^n$ with unit balls. 
In four dimensions, the old conjecture states that a sphere packing is densest when spheres are centered at the points of lattice $D_4$, i.e.  the highest density  $\Delta_4$  is $\pi^2/16$, or equivalently the highest  center density is $\delta_4=\Delta_4/B_4=1/8$.  
For lattice packings, this conjecture was proved by Korkin and Zolatarev in 1872. 
Currently, for general sphere packings the best known upper bound for $\delta_4$ is $0.130587$, a slight improvement on the Cohn--Elkies bound  of $\delta_4<0.13126$, but still nowhere near sharp. 

In \cite{musin2018} we considered the following conjecture:
\smallskip 

\noindent {\bf The 24--cell conjecture.} {\it Consider the Voronoi decomposition of any given packing $P$ of unit spheres in ${\mathbb R}^4$. The minimal volume of any cell in the resulting Voronoi decomposition of $P$ is at least as large as the volume of a regular 24--cell circumscribed to a unit sphere.}

\smallskip
\noindent Note that a proof of the 24-cell conjecture also proves that $D_4$ is the densest sphere packing in 4 dimensions. 

In  \cite[Sect. 4]{mus14} and  \cite[3.3]{musin2018} we considered polynomials $H_k$ that are positive--definite in  ${\mathbb R}^n$. Actually, $H_k$ are polynomials that extend the Bachoc--Vallentin polynomials $S_k$. It is an interesting problem to find generalizations of Theorems \ref{th3p} and \ref{thDD} for sphere packings in  ${\mathbb R}^n$. Perhaps, these bounds for $n=4$ can help to prove the 24--cell conjecture.

\subsection{SDP bounds  for Thomson's and related problems}

The objective of the Thomson problem is to determine the minimum electrostatic potential energy configuration of $N$ electrons constrained to the surface of a unit sphere.  
More generally, a configuration is called $h$-optimal for a potential interaction $h: [-1,1] \to \mathbb R$, if it minimizes the $h$-energy. 

 
The logarithmic potential, Thomson, Newton potential, and more generally the Riesz potential as well as as well as the Gaussian potential have been well studied in the literature \cite{BHS}. All of these potentials are absolutely monotone potentials. Cohn and Kumar invented universally optimal configurations, namely they minimize the energy for all absolutely monotone potentials $h$ \cite{CK}. 

The regular simplex,  the cross polytope and the so-called isotropic
spaces are the only known classes of are universally optimal configurations,  (see \cite[Table 1]{CK}). All other known optimal configurations in the literature, even when the interacting potential $h$ is fixed, have particular values of the dimension $d$ and the cardinality $N$, see the fundamental monograph on this topic  \cite{BHS}. For instance, the Thomson  problem is solved only for $N\le 6$ and $N=12$. 

Note that one of the most powerful tool for lower bounding $h$-energy is the  Delsarte–Yudin linear programming method.  This method was applied to most of the configurations in  \cite[Table 1]{CK} to prove that they are universally optimal.

Let a potential $h$ is given. Let $g(t)\le h(t)$ on some interval $T$. Then Theorem \ref{th3p} can be considered as a Yudin type bound the minimum energy. It is a very interesting task to consider various cases of optimal configurations for known potentials and see what kind of bounds can be obtained using this method.

\subsection{SDP bound for contact graphs and the Tammes problem}

The following problem was first asked by the Dutch botanist Tammes in 1930:\\
 {\em Find the largest angular separation $\theta$ of a spherical code $C$ in ${\mathbb S}^2$ of cardinality $N$.} \\In other words, \\ {\it How are $N$ congruent, non-overlapping circles distributed on the sphere when the common radius of the circles is as large as possible?}
 
 The Tammes problem is presently solved for only a few values of $N$: for $N=3,4,6,12$ by L. Fejes T\'oth; for $N=5,7,8,9$ by Sch\"utte and van der Waerden;  for $N=10,11$ by Danzer; for $N=24$ by Robinson.; and  for $N=13, 14$ by Musin \& Tarasov \cite{mus12a, MTT14}.

The computer-assisted solution of Tammes' problem for $N=13$ and $N=14$ consists of three parts: 
(i) creating the list $L_N$ of all planar graphs with $N$ vertices that satisfy the conditions of \cite[Proposition 3.1]{MTT14}; 
(ii) using linear approximations and linear programming to remove from the list $L_N$ all graphs that do not satisfy the known geometric properties of the maximal contact graphs \cite[Proposition 3.2]{MTT14}; 
(iii) proving that among the remaining graphs in $L_N$ only one is maximal. 

In fact, the list $L_N$ consists of a huge number of graphs. (For $N=13$ it is about $10^8$ graphs.) We think that  this paper can help to reduce the number of graphs in $L_N$.

\subsection{Generalization of the $k$--point SDP bound for spherical codes}

In \cite{mus14} we invented  the $k$--point SDP bound for spherical codes. Note that for $k=2$ that is the classical Delsarte bound.  The 3--point SDP bound was first considered by Bachoc and Vallentin \cite{bac08a}. Recently, this method  with $k=4,5,6$ was apply for upper bounds of the maximum number of equiangular lines in $n$ dimensions \cite{LMOV}. It is an interesting problem to find generalizations of results in this paper using  the $k$--point SDP bounds and apply these bounds for $s$--distance sets and equiangular lines. 



\medskip

\medskip

\noindent{\bf Acknowledgments.} I am very grateful to Maria Dostert, Alexander Kolpakov and Philippe Moustrou for helpful discussions and useful comments. We spent a lot of time together discussing ideas, algorithms and current results on the uniqueness problem of  kissing arrangements. We have encouraging results and I hope that we can complete our paper \cite{uniq24}.

\medskip

\medskip

\medskip

\medskip

\medskip

\medskip

O. R. Musin, School of Mathematical and Statistical Sciences, University of Texas Rio Grande Valley,  One West University Boulevard, Brownsville, TX, 78520.

 {\it E-mail address:} oleg.musin@utrgv.edu

\end{document}